\tikzset{
  LabelStyle/.style = { rectangle, rounded corners, draw,
                      minimum width = 1em, 
                       font =  },
  VertexStyle/.append style = { inner sep=3pt,
                               font = \large},
  EdgeStyle/.append style = {->, bend left} }
\newenvironment{psmallmatrix}
  {\left(\begin{smallmatrix}}
  {\end{smallmatrix}\right)}
\theoremstyle{plain}
\newtheorem{theorem}{Theorem}
\newtheorem{prop}[theorem]{Proposition}
\newtheorem{coro}[theorem]{Corollary}
\theoremstyle{definition}
\newtheorem{remark}[theorem]{Remark}
\newtheorem{example}[theorem]{Example}
\newcommand{\Z}{{\mathbb Z}}
\newcommand{\R}{{\mathbb R}}
\newcommand{\C}{{\mathbb C}}
\newcommand{\exend}{\hfill $\Diamond$}
\begin{document}

\title[Binary substitutions of constant length]{
Lyapunov Exponents for Binary  Substitutions \\
of Constant Length
}

\author{Neil Ma\~nibo}
\date{\today}
\address{Fakult\"at f\"ur Mathematik, Universit\"at Bielefeld, \newline
\hspace*{\parindent}Postfach 100131, 33501 Bielefeld, Germany}
\email{cmanibo@math.uni-bielefeld.de }

\begin{abstract}  

A method of confirming the absence of absolutely continuous diffraction for binary constant-length substitutions, which are primitive and aperiodic, via the positivity of Lyapunov exponents derived from the corresponding Fourier matrices is presented, which provides an approach that is independent of previous results on the basis of Dekking's criterion.  
\end{abstract}

\keywords{diffraction, substitution, Lyapunov exponent, absolutely continuous, Mahler measure}

\subjclass[2010]{37A30, 37D25, 28D20, 52C23}

\maketitle

\section{Introduction}

Classifying aperiodic substitutions via their diffraction  profile remains to be one of the key problems in the spectral theory of substitution systems.
Various works have already clarified the correspondence between the dynamical (which is in general richer) and the diffraction spectra of such systems, with Solomyak, Lee, and Moody proving in \cite{LMS} that the notion of having a pure point dynamical spectrum and diffraction spectrum is equivalent for Delone sets with finite local complexity and uniform cluster frequencies. Baake, Lenz, and van Enter later showed in \cite{BLvE} that a recovery of the dynamical spectrum for systems with mixed spectrum is possible via the diffraction of the substitution's factors. These inclusion results are crucial as one seeks for generalizable criteria satisfied by substitutions bearing the same spectral type.
 
A lot of progress has been made in the constant length case; i.e., when the substituted words all have the same length. We mention some of them, so as to see that the results we obtain here are consistent to what have previously been shown. 
Dekking has shown in \cite{Dekking} that a constant length substitution with height 1 has pure point dynamical spectrum if and only if it has a coincidence, and has partly continuous spectrum otherwise.
Queff\'{e}lec, in her extensive survey in \cite{Queffelec}, paid particular attention to the bijective Abelian cases, which were shown to have Riesz products converging weakly to singular measures that generate the maximal spectral type. Bartlett explicitly proved in \cite{Bartlett} that these systems have singular dynamical spectra.  

The main contribution of this paper is the detection of the lack of absolutely continuous diffraction, which has proven to be quite elusive in the one--dimensional case. Frank has detailed a construction via Hadamard matrices of $d-$dimensional substitutions  which have a Lebesgue component of even multiplicity in their spectra in \cite{Frank}.  Chan and Grimm were also able to show in \cite{ACspec} that a family of four-letter substitutions in one dimension can be obtained via sequences satisfying the root $N$ property, which by Queff\'{e}lec's arguments in \cite{Queffelec} automatically have an absolutely continuous diffraction component. This is an extension of the famous Rudin--Shapiro substitution, which has long been known to satisfy that condition. Recently, Berlinkov and Solomyak have shown in \cite{SSCL} that an absolutely continuous component requires that the substitution matrix $M_{\varrho}$ of such $q-$letter substitution  has an eigenvalue whose modulus is $\sqrt{q}$.
An example beyond the constant length regime is yet to be found.

This paper is organized as follows: in Section \ref{subs}, we give a brief review of substitutions and their geometric realization as inflation rules. This geometric description  is encoded in the Fourier matrices and the algebra they generate, which will be discussed in Section \ref{ida}. We introduce an alternative to the usual calculation of the diffraction measure, and we explain how the Fourier matrices figure in this renormalisation scheme which can be restricted to the absolutely continuous component. 
The fifth section describes the iteration in the preceding section in the language of cocycles via Lyapunov exponents and why their positivity rules out the existence of the component in question. In the last section, we show that for all considered cases the exponents are indeed positive and we present some examples. 
We end with some concluding remarks and mention the outlook of extending this method to more general cases. 
 
\section{Substitutions}\label{subs}
  We consider a constant length substitution $\varrho$
on a binary alphabet $\mathcal{A}_{2}=\left\{ a,b\right\} $ given by 
$$\varrho:\mathcal{A}_{2}\rightarrow\mathcal{A}_{2}^{+},\varrho:\begin{cases}
a\mapsto w_{a}\\
b\mapsto w_{b}
\end{cases}$$
where $\mathcal{A}_{2}^{+}$ is the set of all finite words with letters in $\mathcal{A}_{2}$ and  $\left|w_{a}\right|=\left|w_{b}\right|=L\geq2$, where $\left|w\right|$ is the \emph{length} of the word $w$. We define  $\mathcal{A}^{*}_{2}=\mathcal{A}^{+}_{2}\cup \left\{\epsilon\right\}$ where $\epsilon$ is the empty word. Note that $\varrho$ extends to $\mathcal{A}_{2}^{*}$ by concatenation. Analogously, the same holds for the sets $\mathcal{A}^{\mathbb{N}_0}_2$ and $\mathcal{A}^{\mathbb{Z}}_2$, which are the sets of infinite and bi-infinite words with letters in $\mathcal{A}_2$, respectively. 
Throughout this paper, we will be dealing with such substitutions which are primitive and aperiodic.  We refer the reader to  \cite[Ch.~4]{BG:AO} for a more detailed exposition.

We choose any bi-infinite fixed point $w\in \mathcal{A}_{2}^{\mathbb{Z}}$ of (possibly a power of) $\varrho$
arising from a legal seed $a_{i}\left|a_{j}\right.$ (which is possible due to the primitivity of $\varrho$). The \emph{symbolic hull} $\mathbb{X}$ defined by $w$ is the orbit closure of the set of all the translates of $w$ under the shift operator $S$ defined by $\big(Sw\big)_{j}:=w_{j+1}$.

By viewing $\varrho$ as
an inflation rule on two tiles of the same length,
we make sense of tile positions. We label each interval by its left endpoint, and so its position is the location of this identifying point. The substitution's geometric and symbolic pictures coincide
assuming we associate to each letter $\alpha$ an interval $I_{\alpha}$ of unit length. 

The coloured point set $\varLambda^{w}$ associated with $w$ can be identified with its embedding in $\Z$, which decomposes into
$$\varLambda^{w}=\varLambda_{a}\overset{\cdot}{\cup}\varLambda_{b},$$
where $\varLambda_{a}$ and $\varLambda_{b}$ precisely are the collection
of control points associated to $a\text{ and }b$ . As with the symbolic one, we build the \emph{geometric hull} by taking the closure of the set of $\Z-$translates of $\varLambda$, i.e., 
$$\mathbb{Y}=\overline{\left\{t+\varLambda^{w}\mid t\in \Z\right\}}.$$
Here, $\mathbb{X}$ and $\mathbb{Y}$ are mutually locally derivable (MLD) and define topologically conjugate dynamical systems; see \cite{BG:AO}.
The \emph{diffraction measure} $\widehat{\gamma}^{}_{\omega}$ associated to the point set $\varLambda^{w}$ is derived from the \emph{autocorrelation} $\gamma^{}_{\omega}$, where $\omega$  is a Dirac comb describing $\varLambda^{w}$. The diffraction is the same for every element in $\mathbb{Y}$ when the system is uniquely ergodic \cite{BL}, which will always be the case due to our assumptions on $\varrho$.
We do not discuss the actual construction here and instead refer the reader to\cite{RPMD} for a brief run-through and \cite[Ch.~9]{BG:AO} for a comprehensive discussion.

 \section{Fourier Matrices and IDA}\label{ida}

We define the Fourier matrix $B\left(k\right)$ associated to $\varrho$ whose entries are given by  
$$
B_{\alpha\beta}\left(k\right)=\sum_{x\in T_{\alpha\beta}}e^{2\pi i kx},
$$
where $T_{\alpha\beta}$ is the set of all relative positions of tiles of type
$\alpha$ in the patch $\varrho\left(\beta\right)=w_{\beta}$, where $\alpha,\beta \in \left\{a,b\right\}$. 
We define the total set $S_T$ to be the set of all relative positions of prototiles in level--1 supertiles corresponding to $w_a$ and $w_b$, i.e. $S_T=\bigcup_{\alpha,\beta}T_{\alpha\beta}$. 
It is clear from the geometry of our inflation tiling that $S_T=\left\{0,1,\ldots,L-1\right\}$, corresponding bijectively to the columns of our substitution. 
With this, we can express $B\left(k\right)$ as
$$B\left(k\right)=\sum_{x\in S_T}e^{2\pi i kx}D_x$$
where $D_x$ is the 0-1 matrix that describes the substitution rule on the column at position $x$. These matrices are also referred to as \emph{instruction matrices} in \cite{Bartlett} and \emph{digit matrices} in older papers and is defined as 
$$\left(D_x\right)_{\alpha\beta}=\begin{cases} 1, & \text{ if } \left(w_{\beta}\right)_x=\alpha, \\
0, & \text{ otherwise}.
\end{cases}$$
Note that $M_{\varrho}=B(0)$.
Next, we consider the complex algebra $\mathcal{B}$ generated by the family $\left\{B\left(k\right)\mid k\in \R\right\}$. We remark that $\mathcal{B}$ and the algebra $\mathcal{B}_D$  generated by the digit matrices $D_z$ coincide, which we refer to as the \emph{inflation displacement algebra (IDA)} of $\varrho$; compare with \cite{BG:Pair}. 

\begin{example}
We consider the Thue--Morse substitution given by 
$\varrho^{}_{TM}: a\mapsto ab, b\mapsto ba$. Its Fourier matrix  reads $$B\left(k\right)=\begin{pmatrix}
1 & e^{2\pi ik}\\
e^{2\pi ik} & 1
\end{pmatrix},$$ with the digit matrices being $D_0=\begin{psmallmatrix}
1 & 0\\
0 & 1
\end{psmallmatrix}, D_1=\begin{psmallmatrix}
0 & 1\\
1 & 0
\end{psmallmatrix}$. 
On the other hand,  the period doubling substitution, which reads $\varrho_{\text{pd}}:a\mapsto ab, b\mapsto aa$, has the Fourier matrix
$$B\left(k\right)=\begin{pmatrix}
1 & 1+e^{2\pi ik}\\
e^{2\pi ik} & 0
\end{pmatrix},$$ with the digit matrices  $D_0=\begin{psmallmatrix}
1 & 1\\
0 & 0
\end{psmallmatrix}, D_1=\begin{psmallmatrix}
0 & 1\\
1 & 0
\end{psmallmatrix}$. 
Their IDAs are given by 
$$\mathcal{B}_{TM}=\left\{\begin{psmallmatrix}
c_1 & c_2\\
c_2& c_1
\end{psmallmatrix} \mid c_1,c_2 \in \C\right\}, \quad \mathcal{B}_{\text{pd}}=\left\{\begin{psmallmatrix}
 c_1+c_2 & c_1\\
c_3 & c_2+c_3
\end{psmallmatrix} \mid c_1,c_2,c_3 \in \C\right\}, $$
which are $2-$dimensional and $3-$dimensional, respectively. 
\exend 
\end{example}

A binary substitution is said to have a \emph{coincidence} at position $x$, if at that specific position $\left(w_a\right)_x=\left(w_b\right)_x$. 
Substitutions whose columns are all permutations of $\left\{a,b\right\}$ are called \emph{bijective}. 
It is easy to see that a binary substitution either is bijective or has a coincidence.  
The following proposition provides a classification of algebras arising from binary substitutions. 

\begin{prop}
Given any binary constant length substitution $\varrho$, its corresponding IDA is either isomorphic to that of Thue-Morse or period doubling; i.e., 
$\mathcal{B}\cong \mathcal{B}_{\text{TM}}$ or $\mathcal{B}\cong \mathcal{B}_{\text{\emph{pd}}}$, where the former holds whenever $\varrho$ is bijective. 
\end{prop}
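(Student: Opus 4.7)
The plan is to classify the possible digit matrices and then determine which algebras they can generate. Because $(D_x)_{\alpha\beta}=1$ iff $(w_\beta)_x=\alpha$, every column of each $D_x$ has exactly one $1$, so on a binary alphabet there are only four possibilities,
\[
E_1 = \begin{psmallmatrix}1&0\\0&1\end{psmallmatrix},\quad E_2 = \begin{psmallmatrix}0&1\\1&0\end{psmallmatrix},\quad E_3 = \begin{psmallmatrix}1&1\\0&0\end{psmallmatrix},\quad E_4 = \begin{psmallmatrix}0&0\\1&1\end{psmallmatrix},
\]
corresponding respectively to the four column types $\binom{a}{b}$, $\binom{b}{a}$, $\binom{a}{a}$, $\binom{b}{b}$. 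A direct computation yields the closure relations $E_2^2 = E_1$, $E_2 E_3 = E_4$, $E_2 E_4 = E_3$, $E_3 E_4 = E_3$, $E_4 E_3 = E_4$, together with the single linear identity $E_1 + E_2 = E_3 + E_4$; hence $\mathcal{B}$ is always a subalgebra of the three-dimensional space $\mathrm{span}_{\C}\{E_1, E_3, E_4\}$.

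In the bijective case only $E_1$ and $E_2$ can occur among the $D_x$. I would first observe that primitivity and aperiodicity force both to appear: using only $E_1$ gives $\varrho(a)=a^L$, $\varrho(b)=b^L$, which is not primitive, and using only $E_2$ gives $\varrho^2(a)=a^{L^2}$, so the orbit closure of the two-sided fixed point of $\varrho^2$ contains the constant sequence $\ldots aaa \ldots$, contradicting aperiodicity. Consequently $\mathcal{B} = \mathrm{span}_{\C}\{E_1, E_2\}$, which is exactly $\mathcal{B}_{\mathrm{TM}}$.

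In the coincidence case at least one of $E_3, E_4$ appears; by exchanging $a \leftrightarrow b$ if needed, I may assume $E_3$ does. I would then eliminate the remaining small-set configurations: a digit set contained in $\{E_1, E_3\}$ forces $w_a = a^L$, so $\varrho$ is not primitive, while a digit set contained in $\{E_3, E_4\}$ forces $w_a = w_b$, so every $\varrho^n$-fixed point is $L^n$-periodic, violating aperiodicity. In every remaining configuration the digit set contains $E_3$ together with either $E_2$ or $E_4$ (and possibly $E_1$), and the multiplication table above then produces all of $\{E_1, E_2, E_3, E_4\}$ within the generated algebra. Therefore $\mathcal{B} = \mathrm{span}_{\C}\{E_1, E_3, E_4\} = \mathcal{B}_{\mathrm{pd}}$, literally.

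The whole argument is thus an enumeration over the nonempty subsets of $\{E_1, E_2, E_3, E_4\}$; the only delicate point, and the one I would take care with, is verifying that primitivity and aperiodicity really do exclude the small-algebra configurations $\{E_1, E_3\}$, $\{E_1, E_4\}$, $\{E_3, E_4\}$, and the singleton cases, so that only $\mathcal{B}_{\mathrm{TM}}$ and $\mathcal{B}_{\mathrm{pd}}$ are realized.
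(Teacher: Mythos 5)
Your proof is correct, and in fact the paper states this proposition without supplying any proof, so your argument fills a genuine gap rather than duplicating one. Your enumeration of the four instruction matrices is precisely the classification the paper itself uses later (the partition of $S_T$ into $C_a$, $C_b$, $P_a$, $P_b$ corresponds to your $E_3$, $E_4$, $E_1$, $E_2$), your multiplication table and the relation $E_1+E_2=E_3+E_4$ are all verified correctly, and you rightly identify the one delicate point — using primitivity and aperiodicity to exclude the degenerate digit sets $\{E_1\}$, $\{E_2\}$, $\{E_1,E_3\}$ (and its relabelled variants), and $\{E_3,E_4\}$ — and handle it properly.
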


We recall that a complex $2\times2$ matrix algebra is only \emph{irreducible} if it is the full algebra $\emph{M}_{2}\left(\C\right)$, which is of dimension 4. 
Since the possible IDAs are either of dimension 2 or 3, each one of them is reducible and bears at least one invariant subspace.

\section{Renormalisation}

We extend the method of deriving \emph{exact renormalisation relations} for pair correlation functions (which in theory is possible for any substitution that is locally recognizable) done for particular cases in \cite{BG:Pair} to arbitrary binary constant length substitutions. In this scheme, these relations are used to decompose the \emph{autocorrelation} $\gamma^{}_{\Lambda}$ and lift the renormalisation to the components of the \emph{diffraction measure} $\widehat{\gamma^{}_{\Lambda}}$ via the Fourier matrices, which we briefly describe below.  

Consider the relative frequency $\nu_{\alpha\beta}\left(z\right)$ of distance
$z$ from a left endpoint of a tile of type $\alpha$ to a tile of type
$\beta$, which is given directly as 
$$\nu_{\alpha \beta}\left(z\right)=\underset{r\rightarrow \infty}{\lim}\dfrac{\textrm{card}\left(\Lambda_{\alpha,r}\cap\left(\Lambda_{\beta,r}-z \right)\right)}{\textrm{card}\left(\Lambda_r \right)},$$
where $\Lambda_{\alpha,r}=\Lambda_{\alpha}\cap\left[-r,r\right]$. These frequencies exist due to the strict ergodicity of the dynamical system $\big(\mathbb{X},S\big)$, which follows from the primitivity of $\varrho$.
 From these pair correlation functions, four positive pure point measures can be built, namely 
$$
\boldsymbol{\nu}_{\alpha\beta}=\sum_{z\in\Lambda_{\beta}-\Lambda_{\alpha}}\nu_{\alpha\beta}\left(z\right)\delta_{z}.
$$

 The column measure vector given by $\boldsymbol{\nu}=\left(\boldsymbol{\nu}_{aa},\boldsymbol{\nu}_{ab},\boldsymbol{\nu}_{ba},\boldsymbol{\nu}_{bb}\right)^{\mathrm{T}}$ is Fourier transformable and $\widehat{\boldsymbol{\nu}}$ 
satisfies 
$$
\widehat{\boldsymbol{\nu}}=\dfrac{1}{\lambda^{2}}A\left(.\right)\left(f^{-1}\cdot\widehat{\boldsymbol{\nu}}\right),
$$
where $f\left(x\right)=\lambda x$, and $A\left(k\right)=B\left(k\right)\otimes\overline{B\left(k\right)}$, $\lambda$ being the inflation multiplier of $\varrho$. 
Furthermore, each spectral component $\left(\widehat{\boldsymbol{\nu}}\right)_{\texttt{t}}, \texttt{t}\in\left\{\texttt{pp},\texttt{ac},\texttt{sc}\right\}$ 
individually satisfies
the same renormalisation equation. In particular, if one constructs the density vector $h$,  whose components are the Radon--Nikodym densities $h_{\alpha\beta}$  associated to $\left(\widehat{\boldsymbol{\nu}}_{\alpha\beta}\right)_{\texttt{ac}}$,
 it must be true that
$$
h\left(\dfrac{k}{\lambda}\right)=\dfrac{1}{\lambda}A\left(\dfrac{k}{\lambda}\right)h(k)
$$
for a.e. $k\in\mathbb{R}$. The dimension
of the iterative equation can further be reduced via the fact that $H=\left(h_{\alpha\beta}\left(k\right)\right)$
is a Hermitian positive definite matrix of rank at most 1, from which
we obtain an inward iteration given by 
$$
v\left(\frac{k}{\lambda^{n}}\right)=\lambda^{-n/2}B\left(\frac{k}{\lambda^{n}}\right)\cdot\ldots\cdot B\left(\frac{k}{\lambda}\right)v(k)
$$
where $H=\left|v\left\rangle \right\langle v\right|,v\left(k\right)=\left(v^{ }_{0}(k),v^{ }_{1}(k)\right)^{\mathrm{T}}$, which takes the form of an almost projector, i.e. $H^2=\|v\|^2\cdot H$, where $v$ is not necessarily of norm 1 and can even be the zero vector. Element-wise, one has $H_{ij}=v^{}_{i}\overline{v^{ }_{j}}$.  This dimension reduction step is not straightforward and is even more subtle for substitutions on more than two letters. We refer the reader to \cite[Lem.~4.3]{Family} for the proof of this in the binary case. 
Likewise, we also have the outward iteration, which reads
\begin{equation}\label{iteration}
v(\lambda^{n}k)=\lambda^{n/2}B^{-1}(\lambda^{n-1}k)\cdot\ldots\cdot B^{-1}(k)v(k),
\end{equation}
which is well-defined for a.e. $k$ when the determinant of $B^{-1}(k)$ does not vanish everywhere.
The rest of our discussion will focus on the outward iteration. 
\begin{remark}
The analysis is anchored on the growth of the norm
of $v$ under this iteration, i.e., since $\left| v(\lambda^{n}k)\right| \rightarrow \infty$ as $n\rightarrow\infty$
would be in contradiction to the
translation boundedness of $\left(\widehat{\boldsymbol{\nu}}\right)_{\text{ac}}$, we must have
  $h=0$ in the Lebesgue sense and hence $\left(\widehat{\boldsymbol{\nu}}\right)_{\text{ac}}=0.$ This blow-up is related to the Lyapunov exponents derived from the matrix product $B^{-1}(\lambda^{n-1}k)\cdot\ldots\cdot B^{-1}(k)$ which will be discussed in the next section.
  \end{remark}

\begin{remark}\label{nonpisot}
We note that $\lambda$ is used in this section to remind that no restriction on the inflation factors is enforced for this renormalization procedure to be applicable. A case where $\lambda$ is non-Pisot is tackled rigorously in \cite{WIP1}, and entire family to which the former belongs is dealt with in \cite{Family}. From here onwards, we use the notation $\lambda =L$, where $L \in \mathbb{N}$ is  just the common length  of each substituted word, since we are considering substitutions of constant length.
\end{remark}

\section{Lyapunov Exponents}

To the iteration in Eq. (\ref{iteration}), we associate a  matrix cocyle given by $$B^{\left(n\right)}(k)=B^{-1}(L^{n-1}k)\cdot\ldots\cdot B^{-1}(k).$$
The matrix $B^{-1}(k)$ exists whenever
$\det B(k)\neq0$. This existence holds for a.e. $k$ as long as $\det B(k)\not\equiv 0.$
Since the map $T:k\mapsto\left(Lk\right)\text{mod}1$ is ergodic on the interval
$\left[0,1\right)$ when $L\in\mathbb{N}$, Oseledec's multiplicative
ergodic theorem guarantees Lyapunov regularity a.e. in our case; i.e., the existence
of the Lyapunov exponents for a.e. $k,$ which are given by
$$
\chi^{}_{i}(v)=\lim_{n\rightarrow\infty}\dfrac{1}{n}\log\|L^{n/2}B^{\left(n\right)}(k)v(k)\|
$$
and of a filtration $\mathcal{V}:=\left\{ 0\right\} =\mathcal{V}_{0}\subsetneq\mathcal{V}_{1}\subsetneq\mathcal{V}_{2}=\mathbb{C}^{2}$
such that $\chi^{}_{1}$ is the corresponding exponent for all $0\neq v\in\mathcal{V}_{1}$ and
$\chi^{}_{2}$ for all $v\in\mathcal{V}_{2}\setminus\mathcal{V}_{1}$. The exponents, if they exist, are independent of the matrix norm $\left\|\cdot\right\|$ used. For general background on this topic, we refer the reader to \cite{BP:NH,Viana}. 

There are at most two distinct exponents for the binary case, 
which by regularity satisfy 
$$
\chi^{}_\mathrm{min}+\chi^{}_\mathrm{max}=\lim_{n\rightarrow\infty}\dfrac{1}{n}\log\left|\det B^{\left(n\right)}(k)\right|+\log L,
$$
where $\chi^{}_\mathrm{min},\chi^{}_\mathrm{max}$ are given by 
\begin{align}\label{exp}
\chi^{}_\mathrm{min}(k) & = \log\sqrt{L}-\underset{n\rightarrow\infty}{\lim}\dfrac{1}{n}\log\left\Vert \big(B^{\left(n\right)}(k)\big)^{-1}\right\Vert \nonumber \\ 
\chi^{}_\mathrm{max}(k) & = \log\sqrt{L}+\underset{n\rightarrow\infty}{\lim}\dfrac{1}{n}\log\left\Vert B^{\left(n\right)}(k)\right\Vert. 
\end{align}

\begin{remark}
The positive measure $\left(\widehat{\boldsymbol{\nu}}^{}_{aa}+\widehat{\boldsymbol{\nu}}^{}_{bb}\right)_{\texttt{ac}}$ could be represented by the Radon--Nikodym density $h^{\prime}(k)=\left|v_0(k)\right|^2+\left|v_1(k)\right|^2$, where $v_0,v_1$ are the components of the vector $v(k)$ being iterated via the cocycle $B^{\left(n\right)}(k)$. Suppose that the minimum exponent $\chi^{}_\mathrm{min}(k)=D>0$. Then we have that $\exists C$ such that 
$h^{\prime}(L^{n}k)\geq Ce^{Dn}h^{\prime}(k)$; compare \cite{BP:NH}, which contradicts the translation boundedness of $\left(\widehat{\boldsymbol{\nu}}^{}_{aa}+\widehat{\boldsymbol{\nu}}^{}_{bb}\right)_{\texttt{ac}}$, and hence effectively of $\widehat{\boldsymbol{\nu}}$. The only way to escape this situation is $v(k)=0$ and hence $h(k)\equiv 0$ for Lebesgue a.e. $k$.  
It suffices to show that $\chi^{}_\mathrm{min}$ is positive to show that $\left(\widehat{\boldsymbol{\nu}}\right)_{\text{ac}}=0$, since the contradiction follows automatically for the subspace of $\C^2$ corresponding to the larger exponent. 

\end{remark}
\begin{remark}
It is worth noting that for the inward iteration,
with the associated cocycle being $B^{\left(n\right)}\left(k\right)=B\left(\frac{k}{\lambda^{n}}\right)\cdot\ldots\cdot B\left(\frac{k}{\lambda}\right)$, the Lyapunov exponents can easily be computed using the eigenvalues of the substitution matrices, and the subspaces $\mathcal{V}_{1},\mathcal{V}_{2}$ are spanned by the corresponding eigenvectors. This is because $B\left(\frac{k}{\lambda^{n}}\right)\rightarrow M_{\varrho}$ as $n\rightarrow \infty$, and hence the cocyle behaves more like a power $M^{n}_{\varrho}$ of the substitution matrix. However, in this case, it is possible to get zero and negative infinity as limits, which do not contradict the properties of $\left(\widehat{\boldsymbol{\nu}}\right)_{\text{ac }}$. This means the examination of the other iteration is needed.

\end{remark}

The next section is devoted to showing the positivity sought, which we achieve via bounds on norms of polynomials satisfying some specific properties.

\section{Positivity of $\chi_{\min}$}

In general, the explicit computation of the Lyapunov exponents for analytic cocyles is a hard problem, but the existence of a common invariant subspace helps in this case.
Since there are only four possible column types in the binary case, it is possible to express the entries of the Fourier matrices in terms of trigonometric polynomials associated to these column types. 	
First, we construct the  sets 
\begin{eqnarray*}
C_{a} & = & \left\{ i\mid\left(w^{}_{a}\right)_{i}=\left(w^{}_{b}\right)_{i}=a\right\} \\
C_{b} & = & \left\{ i\mid\left(w^{}_{a}\right)_{i}=\left(w^{}_{b}\right)_{i}=b\right\} \\
P_{a} & = & \left\{ i\mid\left(w^{}_{a}\right)_{i}=a,\left(w^{}_{b}\right)_{i}=b\right\} \\
P_{b} & = & \left\{ i\mid\left(w^{}_{a}\right)_{i}=b,\left(w^{}_{b}\right)_{i}=a\right\} 
\end{eqnarray*}
where $C_{a},C_{b}$ are the positions corresponding to coincidences
and $P_{a},P_{b}$ are the bijective positions. These sets are disjoint
and their union is $S_T=\left\{ 0,1,\ldots,L-1\right\} $. From these we define polynomials
\begin{align*}
P_{1}&= \sum_{z\in C_{a}}u^{z} &P_{2} &=\sum_{z\in C_{b}}u^{z}\\
Q&= \sum_{z\in P_{a}}u^{z} &R &=\sum_{z\in P_{b}}u^{z}
\end{align*}
where $P_{1}+P_{2}+Q+R=p^{}_{L}=1+u+\ldots+u^{L-1}$, $u=e^{2\pi ik}$. 

The Fourier matrix of $\varrho$ can be constructed from these
polynomials to be
$$
B\left(k\right)=\begin{pmatrix}P_{1}+Q & P_{1}+R\\
P_{2}+R & P_{2}+Q
\end{pmatrix},$$
with $\det B\left(k\right)=p^{}_{L}\cdot\left(Q-R\right).$
From this, we can compute from  Eq. (\ref{exp}) the sum of the exponents to be
\begin{equation}\label{sumexpo}
\log L-\dfrac{1}{n}\underset{m=0}{\overset{n-1}{\sum}}\log \left| \det  B\left(L^{m}k\right)\right|
\underset{a.e.\text{ }k}{\xrightarrow{n\rightarrow\infty}}\log L
-\underbrace{\int_{0}^{1}\log\left|p^{}_{L}\right|dk}_{=0}-\underbrace{\int_{0}^{1}\log\left|Q-R\right|dk}_{=m\left(Q-R\right)},
\end{equation}
where $m\left(Q-R\right)$ is the log--Mahler measure of the polynomial
$Q-R$. This convergence is not trivial, as one might expect, as $\log \left| \det B\left(L^{m}k\right)\right|$, which is almost periodic, might have singularities (at most countably many, when $\det B\left(k\right)\not\equiv 0$). It is obvious that the requirements of Weyl's criterion are not satisfied and that one must resort to some discrepancy analysis to show that its time average does have the same integral as its limit. We 
refer the reader to \cite{Average} for a more detailed account on this. 

Actual values of $m\left(Q-R\right)$ for specific polynomials can
be obtained via Jensen's formula, see \cite{Ahlfors}, which states 
\begin{equation}\label{Jensen}
m\left(f\right)=\int_{0}^{1}\log\left|f\left(e^{2\pi ik}\right)\right|dk=\log\left|a_{0}\right|+\sum_{i=1}^{n}\log\left(\max\left\{ 1,\left|\xi_{i}\right|\right\} \right),
\end{equation}
where $f=a_{0}+a_{1}u+\ldots+a_{n}u^{n}$, and $\xi_{i}$ are the roots
of $f$. From this, one can easily see that $m\left(p_L\right)=0$ because $p_L\mid u^{L}-1.$
\begin{remark}
From the disjointness of $P_a$ and $P_b$, $Q-R=0$
implies that $Q=R=0$, and hence the substitution is composed only of coincident columns. We then get exactly the same substituted word for each letter, i.e. $w_a=w_b$
and the periodicity of the hull $\mathbb{X}$ is immediate. For such cases, it is known \emph{a priori} that the diffraction spectrum is pure point. 
However, our method is not applicable because $\det B\left(k\right)=0$ for all $k$. 
 \end{remark}
 
 We now state the main result via the next theorem and the succeeding corollary. 
\begin{theorem}\label{ThmLyapunov}

The pointwise Lyapunov exponents of an aperiodic binary constant length substitution $\varrho$, for a.e. $k\in\R$, are given by 
\begin{align*}
\chi^{}_{\mathrm{max}}\left(k\right)&=\log\sqrt{L} \\ 
\chi^{}_{\mathrm{min}}\left(k\right)&=\log\sqrt {L}-m\left(Q-R\right).
\end{align*}
\end{theorem}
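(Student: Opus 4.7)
The plan is to exploit a common invariant direction for the family $\{B(k)\}$ and reduce the Lyapunov analysis along it to a Birkhoff average, then to pin down the remaining exponent via the determinant identity already established in Eq.~(\ref{sumexpo}).

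First I would verify by direct calculation that the constant vector $(1,-1)^{\mathrm{T}}$ is a common eigenvector of every $B(k)$, with eigenvalue $Q(k)-R(k)$; this is precisely the one-dimensional common invariant subspace predicted by the IDA classification proposition in Section~\ref{ida}. Wherever $\det B(k)\neq 0$, the inverse $B^{-1}(k)$ acts on this line by the scalar $(Q(k)-R(k))^{-1}$, so the cocycle $B^{(n)}(k)$ restricted to $\mathrm{span}\{(1,-1)^{\mathrm{T}}\}$ is multiplication by $\prod_{m=0}^{n-1}\bigl(Q(L^{m}k)-R(L^{m}k)\bigr)^{-1}$. Consequently, the Lyapunov exponent associated with this direction equals
$$
\log\sqrt{L}\;-\;\lim_{n\to\infty}\frac{1}{n}\sum_{m=0}^{n-1}\log\bigl|Q(L^{m}k)-R(L^{m}k)\bigr|.
$$

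Second, since the map $T:k\mapsto Lk\bmod 1$ is ergodic on $[0,1)$, a Birkhoff-type argument applied to $\log|Q-R|$, together with the definition of the Mahler measure $\int_{0}^{1}\log|Q-R|\,dk=m(Q-R)$, identifies the above limit as $m(Q-R)$ for almost every $k$. Thus one of the two Lyapunov exponents equals $\log\sqrt{L}-m(Q-R)$. The other is then forced by the sum formula in Eq.~(\ref{sumexpo}), namely $\chi^{}_\mathrm{min}+\chi^{}_\mathrm{max}=\log L-m(Q-R)$, to be equal to $\log\sqrt{L}$. To assign these correctly to minimum and maximum, I would note that $P_{a}$ and $P_{b}$ are disjoint subsets of $\{0,\dots,L-1\}$, so $Q-R$ has coefficients in $\{-1,0,1\}$ with leading coefficient $\pm 1$; Jensen's formula (Eq.~(\ref{Jensen})) then gives $m(Q-R)\geq 0$, which places the invariant line $(1,-1)^{\mathrm{T}}$ in the slow Oseledec subspace $\mathcal{V}_{1}$ and yields the claimed values. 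Aperiodicity enters via the remark preceding the theorem: it guarantees $Q-R\not\equiv 0$, hence $\det B\not\equiv 0$, so the cocycle $B^{(n)}$ is defined a.e.

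The main obstacle is the Birkhoff step, because $\log|Q-R|$ has logarithmic singularities at the zeros of $Q-R$ on the unit circle; it is integrable, but care is required to push through the pointwise a.e.\ convergence of the ergodic averages under the map $T$. This is precisely the discrepancy-type analysis already flagged by the author after Eq.~(\ref{sumexpo}) and carried out in \cite{Average}. Everything else — the eigenvector computation, the Jensen evaluation, and the sum-of-exponents identity — is algebraic and already in place in the paper.
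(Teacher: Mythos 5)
Your proposal is correct and follows essentially the same route as the paper: the common eigenvector $(1,-1)^{\mathrm{T}}$ with eigenvalue $Q-R$ gives one exponent as $\log\sqrt{L}-m(Q-R)$ via the ergodic average of $\log\left|Q-R\right|$ along the orbit of $k\mapsto Lk \bmod 1$, and the determinant identity of Eq.~\eqref{sumexpo} then fixes the other exponent at $\log\sqrt{L}$. The only cosmetic difference is in assigning minimum versus maximum: you invoke $m(Q-R)\geq 0$ (Jensen's formula with leading coefficient $\pm 1$) to order the two values directly, whereas the paper first establishes $\chi^{}_{\mathrm{max}}\geq\log\sqrt{L}$ via the adjugate/Frobenius-norm identity; both close the argument, and you correctly isolate the one genuinely delicate step, namely the a.e.\ convergence of the Birkhoff sums of the singular function $\log\left|Q-R\right|$, which the paper likewise defers to \cite{Average}.
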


\begin{proof}

We can rewrite $\chi_{\mathrm{max}}$ in Eq.~\eqref{exp} as
$$\log\sqrt{L}-\underset{n\rightarrow\infty}{\lim}\dfrac{1}{n}\log\left\Vert B^{\text{ad}}(L^{n-1}k)\cdot\ldots\cdot B^{\text{ad}}(k)\right\Vert -\dfrac{1}{n}\underset{m=0}{\overset{n-1}{\sum}}\log \left| \det\left( B\left(L^{m}k\right)\right)\right|, $$
where the last term converges as $n\rightarrow \infty$ for a.e. $k$ to $m\left(Q-R\right)$ as with Eq.~\eqref{sumexpo}. 
 The extremal exponents then read 
\begin{align*}
\chi^{}_\mathrm{min}(k) & = \log \sqrt{L}-\underset{n\rightarrow\infty}{\lim}\dfrac{1}{n}\log\left\Vert B(k)\cdot\ldots\cdot B(L^{n-1}k)\right\Vert \\
\chi^{}_\mathrm{max}(k) & = \log L- \chi^{}_\mathrm{min}(k)-m\left(Q-R\right),
\end{align*}
where we have used that for $2\times2$ matrices, $\left\Vert A\right\Vert_{F}=\left\Vert A^{\text{ad}}\right\Vert_{F}$, with $\left\Vert\cdot\right\Vert_{F}$ being the Frobenius norm. 

We carry out the calculation of the minimum exponent by noting that $\C\left(\left(1,-1\right)^{\mathrm{T}}\right)$ is always an invariant subspace for any $B(k)$, i.e. 
$$B(k)\begin{pmatrix}1\\
-1\end{pmatrix}=\left(Q-R\right)\begin{pmatrix}1\\
-1\end{pmatrix}.$$ 
Comparing $\chi^{}_\mathrm{min}$ with the value we have for the iteration on this subspace, we get, for a.e. $k$,
$$\chi^{}_\mathrm{min}\left(k\right)\leq\log\sqrt{L}-\underset{n\rightarrow\infty}{\lim}\dfrac{1}{n}\underset{m=0}{\overset{n-1}{\sum}}\log \left| Q-R\right|\left(u^{L^m}\right)=\log\sqrt{L}-m\left(Q-R\right)\leq \log\sqrt{L}, $$
noting that $\log\sqrt{L}-m\left(Q-R\right)$ is a value for one of the exponents. 
From the previous inequality, this cannot be $\chi^{}_\mathrm{max}$ when $m\left(Q-R\right)>0$
since $\chi^{}_\mathrm{max}\left(k\right)\geq \log\sqrt{L} $.
Hence, in general, it must be the smaller of the two, and from their sum via Eq.~\eqref{sumexpo} we obtain the result. Equality of the two exponents holds when $m\left(Q-R\right)=0$, which due to a result by Kronecker happens exactly whenever $Q-R$ is a product of a monomial and a cyclotomic polynomial \cite{Kronecker}. 
  \end{proof}
\begin{coro}
Let $\varrho$  be as in the previous theorem. Then, the Lyapunov exponents $\chi^{}_\mathrm{min}\left(k\right)$ and $\chi^{}_\mathrm{max}\left(k\right)$
are positive for a.e. $k\in \R$.  
\end{coro}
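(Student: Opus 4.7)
The plan is to derive both positivity statements from Theorem~\ref{ThmLyapunov} together with the strict bound $m(Q-R)<\log\sqrt{L}$. Since $\chi_{\max}(k)=\log\sqrt{L}\geq\log\sqrt{2}>0$ by the standing hypothesis $L\geq 2$, positivity of $\chi_{\max}$ is immediate. Everything then reduces to controlling the log-Mahler measure of $Q-R$ from above, noting that aperiodicity of $\varrho$ forces $w_a\neq w_b$, hence $|P_a|+|P_b|\geq 1$ and $Q-R$ is a nonzero polynomial.

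The main tool I would use is the classical $L^2$-bound $m(f)\leq \log\|f\|^{}_2$. Applying Jensen's inequality (concavity of $\log$) to $|f(e^{2\pi ik})|^2$ yields
\begin{equation*}
2\,m(f) \;=\; \int_0^1 \log |f(e^{2\pi ik})|^2\, dk \;\leq\; \log \int_0^1 |f(e^{2\pi ik})|^2\, dk \;=\; \log\sum_i |a_i|^2,
\end{equation*}
with equality iff $|f|^2$ is a.e.\ constant on the unit circle. Applied to $f=Q-R$, this gives $m(Q-R)\leq \frac{1}{2}\log(|P_a|+|P_b|)$, since $\|Q-R\|_2^2=|P_a|+|P_b|$.

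I would then split into two cases. If $\varrho$ has at least one coincidence column, then $|P_a|+|P_b|\leq L-1$, and the bound above already gives $m(Q-R)\leq \frac{1}{2}\log(L-1) < \log\sqrt{L}$. In the bijective case every column lies in $P_a\cup P_b$, so $|P_a|+|P_b|=L$ and the bound only yields $m(Q-R)\leq \log\sqrt{L}$; equality must therefore be ruled out. For this, I would use that $|f|^2\equiv c$ on the unit circle forces $f(u)\,\widetilde{f}(u)\equiv c\,u^{\deg f}$ as a polynomial identity in $\C[u]$ (where $\widetilde{f}$ denotes the reciprocal polynomial), since both sides agree on $|u|=1$. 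As $\C[u]$ is a UFD and the right-hand side is a monomial, $f$ itself must be a monomial; but in the bijective case $Q-R$ has $L\geq 2$ nonzero $\pm 1$ coefficients, a contradiction that yields the desired strict inequality.

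The bijective case is where the main work lies: the $L^2$-bound is tight precisely there, and one must exclude the existence of a flat $\{-1,0,+1\}$-valued polynomial of length $L$ with constant modulus $\sqrt{L}$ on the unit circle (a Barker-like condition). Fortunately the reciprocal-polynomial factorisation argument is elementary and sharp enough, so no appeal to deep existence results about Barker sequences or to an explicit Mahler-measure computation via Jensen's formula is required. Combining the two cases gives $m(Q-R)<\log\sqrt{L}$, and therefore $\chi_{\min}(k)$ and $\chi_{\max}(k)$ are strictly positive for a.e.\ $k\in\R$.
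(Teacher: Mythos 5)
Your proof is correct and follows essentially the same route as the paper: both reduce the claim to the strict bound $m(Q-R)<\log\sqrt{L}$ via Jensen's inequality against the $L^2$ norm of $Q-R$, compute $\left\Vert Q-R\right\Vert_2=\sqrt{L-\mathrm{card}(C_a\cup C_b)}$ by Parseval, and split into the coincidence and bijective cases, with the monomial/constant-modulus situation as the only equality case to exclude. The sole difference is cosmetic: the paper interpolates through the $L^1$ norm, whereas you go directly to $L^2$ and spell out (via the reciprocal-polynomial factorisation) why constant modulus on the circle forces a monomial, a step the paper merely asserts.
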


\begin{proof}
It suffices to show that $m\left(Q-R\right)<\log \sqrt {L}.$
Consider $M\left(Q-R\right)=\text{exp}\left(m\left(Q-R\right)\right).$
By Jensen's inequality and the usual ordering of polynomial norms, we know that 
\begin{equation}\label{JensenIneq}
M\left(Q-R\right) < \left\Vert Q-R\right\Vert _{1} \leq \left\Vert Q-R\right\Vert _{2}
\end{equation}
where the strictness of the first inequality follows from the strict convexity of  $g(x)=e^{x}$ \cite{LL:Analysis}, and the second one is an equality only when $Q-R$ is a monomial. Moreover, we have  from Parseval's identity that
$$\left\Vert Q-R\right\Vert _{2}=\sqrt{L-\text{card}\left(C_{a}\cup C_{b}\right)}.$$

When the substitution has a coincidence, we have that $\text{card}\left(C_{a}\cup C_{b}\right)\geq1$. If $Q-R$ is a monomial, $\text{card}\left(C_{a}\cup C_{b}\right)=L-1$ and hence from Eq.~\eqref{Jensen} $$m\left(Q-R\right)=0.$$ 
Otherwise, Eq.~\eqref{JensenIneq} implies $m\left(Q-R\right)<\log\sqrt{L-\text{card}\left(C_{a}\cup C_{b}\right)}<\log\sqrt{L}$,
and therefore $\chi^{}_\mathrm{min}$ is always positive.

For bijective substitutions, we have $P_{1}=P_{2}=0$, and hence $Q-R$ is a Littlewood
polynomial of degree $L-1$; see \cite{Borwein}.
Since $Q-R$ is not a monomial, as $L\geq 2$ and $a_0=\pm 1$, Eq.~\eqref{JensenIneq} reads
$$
M\left(Q-R\right)<\left\Vert Q-R\right\Vert _{1}<\left\Vert Q-R\right\Vert _{2}=\sqrt{L}$$
and hence $\chi_\mathrm{min}>0$. 
Better bounds for $\left\|f\right\|_{1}$ actually exist in literature (see \cite{Borwein,Habsieger}), but are not needed for this positivity result. 
\end{proof}

\begin{example}
For $\varrho^{}_{TM}$ and $\varrho_{\text{pd}}$, we have that $(Q-R)^{}_{TM}=1-e^{2\pi ik},(Q-R)_{\text{pd}}=-e^{2\pi ik}$, and hence $m\left(Q-R\right)=0$ for both cases, implying that $\chi^{}_\mathrm{min}=\chi^{}_\mathrm{max}=\log\sqrt{2}$.
\exend
\end{example}
\begin{example} We note that the Lyapunov spectrum is not an IDA invariant. 
Consider the substitution $\varrho: a\mapsto abbab,b\mapsto baaba$, with corresponding Fourier matrix $$B(k)=\begin{pmatrix}
e^{6\pi ik} & e^{2\pi ik}+e^{4\pi ik}+e^{8\pi ik}\\
e^{2\pi ik}+e^{4\pi ik}+e^{8\pi ik} &  e^{6\pi ik}
\end{pmatrix}.
$$ 
Here, $\left(Q-R\right)\left(u\right)=u\left(-1-u+u^2-u^3\right)$, where $u=e^{2\pi ik}$. Note that its IDA is isomorphic to that of the Thue--Morse since it is bijective. 
One can explicitly compute that the Lyapunov exponents for the outward iteration for this substitution are 
\begin{align*}
\chi^{}_\mathrm{max}(k)&=\log\sqrt{5}\approx 0.8047 \\ 
\chi^{}_\mathrm{min}(k)&=\log\sqrt {5}-2\log\left(1.3562\right)\approx 0.1953.
\end{align*} 
 \exend
\end{example}
\begin{example}
As shown above, we do not have examples in the binary constant length case which contain an absolutely continuous component, and so we consider one on a four-letter alphabet and examine how this method applies and what exponents we obtain from it. 

Let $\varrho^{}_{\text{RS}}: a \mapsto ab, b \mapsto a\bar{b}, \bar{b} \mapsto \bar{a}b, \bar{a}\mapsto \bar{a}\bar{b}$ be the Rudin--Shapiro substitution, see \cite{BG:Pair} whose Fourier matrix is given by 
$$B(k)=\begin{pmatrix}
1 & 1 & 0 & 0\\
e^{2\pi ik} & 0 & e^{2\pi ik} & 0\\
0 & e^{2\pi ik} & 0 & e^{2\pi ik}\\
0 & 0 & 1 & 1
\end{pmatrix}.$$

Since $r=0$ is an eigenvalue of $B(k)$, the matrix fails to be invertible for any $k$, and hence we cannot directly define a cocyle corresponding to the outward iteration from $B$. Note however that there exists a $k-$independent matrix $S$ that transforms $B(k)$ into  block diagonal form, compare with \cite{BG:Pair}), which is explicitly given by  
\[
B^{\prime}(k)=
\left(
\begin{array}{c|c}
\begin{matrix}
1+e^{2\pi ik} & 0 \\
-e^{2\pi ik} & 0
\end{matrix} & \boldsymbol{O} \\
\hline
\boldsymbol{O} & \begin{matrix}
1 & 1 \\
e^{2\pi ik} & -e^{2\pi ik}
\end{matrix}
\end{array}
\right),
\]
with the matrix $C(k)$ in the upper block getting the zero eigenvalue and the lower block matrix $D(k)$ now being invertible for all $k$ $(\det D(k)=-2e^{2\pi ik})$. It is also worth noting that this decomposition into invariant subspaces is induced by the  
bar swap symmetry which the substitution satisfies, which is given by the identification $a\longleftrightarrow \bar{a},b\longleftrightarrow \bar{b}$, see \cite{BG:Pair}. The subspaces $V_C,V_D$ on which these block matrices act are known to carry the specific spectral components; see \cite{Bartlett,Queffelec}; i.e., that the pure point part lives in $V_C$ and the absolutely continuous component is in $V_D$. 
Since $D^{-1}(k)$ exists for all $k$, we can consider the cocyle defined by iterating this matrix and compute its corresponding Lyapunov exponent. From the irreducibility of $D$, it follows that the algebra it generates is the full algebra $M_{2}\left(\C\right)$, and that our technique of computing the exponent for each invariant subspace will not work. 
However, one notices immediately that $\frac{1}{\sqrt{2}}D\left(k\right)$ is unitary and so, for any starting vector, $v\left(k\right)$ ends up having the same norm as its $n-$th iterate $v\left(\lambda^{n}k\right)$ under the cocyle defined by $D$, which means that both Lyapunov exponents vanish, $\chi^{}_{1}=\chi^{}_{2}=0$. One can also check that this unitarity condition is satisfied by the restricted Fourier matrix of the substitution in \cite{ACspec}, which, by construction, is known to contain absolutely continuous diffraction. 
\exend
\end{example}
\section{Conclusion}
We have presented a method that confirms that the diffraction spectrum of such a binary substitution system does not contain an absolutely continuous component, which agrees with previous results in \cite{Bartlett,Queffelec}  predicting a 
singular continuous spectrum for the binary bijective case
or pure point as per Dekking in \cite{Dekking} for those with coincidences.
 
As mentioned in Remark \ref{nonpisot}, the method discussed here does not require the inflation multiplier to be an integer, and hence the same analysis could be used for more general substitutions, even those having non-Pisot inflation multipliers. However, when one foregoes the integer multiplier condition, one also does not get Oseledec's theorem automatically and so one must remedy this to prove the existence of the exponents in a set of full measure via some version of Kingman's subadditive ergodic theorem; see \cite{Schmeling}. 

Obviously, this can be extended to the general $n-$letter constant-length case, in which one can show positivity for all exponents when the substitution is bijective Abelian \cite{WIP2}. This together with Dekking's theorem recovers Bartlett's result in full. The non--Abelian case remains an open question.

It also gets computationally more involved when one deals with matrices that generate irreducible algebras because there are no subspaces where one can calculate the exponents directly. The unitarity of the restricted Fourier matrix in the Rudin--Shapiro case obviously does not hold in general; consider for example the substitution in \cite{CG17} which is the same example used in \cite{SSCL} to show that the $\sqrt{q}-$condition is necessary but not sufficient.  
In fact, one can numerically compute that the outward exponents for the a.e. invertible part of the Fourier matrix of this substitution are both positive.

\section{Acknowledgment}
The author is grateful to Michael Baake and Franz G\"ahler for their cooperation, and to Lax Chan and Uwe Grimm for discussions. He also would like to thank an anonymous referee for useful comments that helped improve this manuscript. This  work was financially supported by the German Research Foundation (DFG), within the CRC 701.

%\bibliographystyle{plain}
%\bibliography{refer2}

\end{document}